\definecolor{db}{RGB}{0, 0, 130}
\definecolor{wildstrawberry}{rgb}{1.0, 0.26, 0.64}
\definecolor{rp}{rgb}{0.25, 0, 0.75}
\definecolor{dg}{rgb}{0, 0.5, 0}
\newcommand{\R}{\mathbb{R}}
\newcommand{\N}{\mathbb{N}}
\newcommand{\mm}[1]{{\color{red}#1}}
\newcommand{\customlabel}[2]{%
   \protected@write \@auxout {}{\string\newlabel {#1}{{#2}{\thepage}{#2}{#1}{}}}%
   \hypertarget{#1}{#2\hspace{-0.14cm}}
}
\numberwithin{equation}{section}
\newtheorem{theorem}{Theorem}[section]
\newtheorem{definition}[theorem]{Definition}
\newtheorem{lemma}[theorem]{Lemma}
\title{ \Large{\textbf{From  Hamiltonian Systems to  Compressible  Euler Equation driven by \mm{additive  H\"older}  noise}}}
\author{Jesus M. Correa\thanks{Departamento de Matem\'{a}tica, Universidade Estadual de
Campinas, Brazil. Email: \textsl{j209372@dac.unicamp.br}},Juan David Londoño Acevedo
\thanks{ Escuela de Ciencias Básicas, Tecnología e Ingeniería - ECBTI
Universidad Nacional Abierta y a Distancia-CCAV Sahagún
Colombia \textsl{ e-mail: juan.londono@unad.edu.co}}
,  Christian Olivera\thanks{Departamento de Matem\'{a}tica, Universidade Estadual de
Campinas, Brazil. Email: \textsl{colivera@ime.unicamp.br}.}}
\date{}
\begin{document}

\maketitle

\begin{abstract}
We derive stochastic compressible  Euler Equation from a Hamiltonian microscopic dynamics.
We consider  systems of interacting particles with H\"older noise  and  potential whose range is large in comparison with the typical distance between neighbouring particles. It is shown that the empirical measures associated to the position and velocity of the system converge to the solutions of   compressible Euler equations driven  by  additive H\"older path(noise), in the limit as the particle number tends to infinity, for a suitable scaling of the interactions. 
Furthermore, explicit  rates for the convergence  are
 obtained in Besov and Triebel-Lizorkin spaces.
Our proof  is based on  the It\^o-Wentzell-Kunita formula for  Young integral. 
\end{abstract}

\date{}

\maketitle

\noindent \textit{ {\bf Key words and phrases:} 
Young Differential Equation, Compressible  Euler  Equation, Particle systems,  
Besov and Triebel-Lizorkin spaces, It\^o-Kunita-Wentzell formula.}

\vspace{0.3cm} \noindent {\bf MSC2010 subject classification:} 60H15, 
 35R60, 60H30, 35Q31
. 

%
%
%
%
%
%
%

Data sharing not applicable to this article as no datasets were generated or analysed during the current study
\section{Introduction}
This paper is aimed to derive macroscopic continuous models characterizing the
asymptotic behavior of interacting particle systems as the number of particles goes to
infinity. More precisely, we analyse the evolution of an indistinguishable $N-$ point particle
system given by

\begin{align}\label{xk}
d^{2}X_{t}^{k,N}=-\frac{1}{N}\sum_{l=1}^{N}\nabla\phi_{N}\left(X_{t}^{k,N}-X_{t}^{l,N}\right)dt
+ \sigma\left(t,X_{t}^{k,N} \right)   dY_{t} 
\end{align}

here  $X^{i,N} $  and   $V^{i,N}=\frac{d X^{i,N}}{dt}$  denote the position and velocity of $i$-particle at time t,respectively,  
  $\{(Y_{t}^{q})_{t\in[0,T]}, \; q=1,\cdots,d\}$ is a  $\R^d$-valued $\alpha$-H\"older path with $\alpha > \frac{1}{2}$. 
Follows  \cite{Oes} the  interaction potential  $\phi_{N}$  is obtained from a function  $\phi_{1}$  by the scaling

 \begin{equation}\label{molli}
 \phi_{N}(x)=N^{\beta}\phi_{1}(N^{\beta/d}x),\   \beta\in(0,1), 
\end{equation}

where  $\phi_{1}$ is symmetric and
sufficiently smooth. The parameter  $\beta\in (0,1)$ describes how $\phi_{1}$ is rescaled for the total number $N$ of
individuals and expresses the so-called moderate interaction among the individuals;
see Oelschl\"ager \cite{Oes2}. On the other hand, $\beta = 0$ expresses an interaction of mean field type, whereas $\beta = 1$ generates the so-called nearest-neighbour interaction. We analyze the asymptotic behavior among many particles 
moderately interacting with each other.

Our aim is the study of the asymptotic as $N\rightarrow\infty$ of  the time evolution
of the whole system of all particles. Therefore, we investigate the empirical processes

\begin{eqnarray}\label{empp}
S_{t}^{N}:=\frac{1}{N}\sum_{k=1}^{N}\delta_{X_{t}^{k,N}},
\end{eqnarray}

\begin{eqnarray}\label{empp}
 V_{t}^{N}:=\frac{1}{N}\sum_{k=1}^{N}V_{t}^{k,N}\delta_{X_{t}^{k,N}}\qquad k=1,\cdots, N
\end{eqnarray}

where  $\delta_{a}$, denotes the
Dirac measure at $a$. The measures $S_{t}^{N}$ and $V_{t}^{N}$ determine the distribution
of the positions and the velocities in the $Nth$ system.

We shall show that $S_{t}^{N}$ and $V_{t}^{N}$ converge as $N\rightarrow\infty$ to solutions of the
continuity equation and the  Euler equation, respectively driven by $\alpha$-H\"older path. 
More precisely  we show the convergence to  the system 

\begin{align}\label{eq1}
\left\{
\begin{array}{lc}
d\varrho =-\operatorname{div}_{x}(\varrho\upsilon)dt
\\[0.3cm]
d(\varrho\upsilon_{q})=-\left(\nabla^{q}p+\operatorname{div}_{x}(\varrho\upsilon_{q}\upsilon)\right)dt 
+ \varrho \sigma_{q}  dY_{t}^{q}\qquad q=1,\cdots, d
\end{array}
\right.
\end{align}

 or equivalently

\bigskip
\begin{align}\label{eq2}
\left\{
\begin{array}{lc}
d\varrho =-\operatorname{div}_{x}(\varrho\upsilon)dt
\\[0.3cm]
d(\upsilon_{q})=-\left(\frac{1}{\varrho} \nabla^{q}p+\upsilon  \nabla  \upsilon_{q} \right) dt+    \sigma_{q}  dY_{t}^{q}\qquad q=1,\cdots, d
\end{array}
\right.
\end{align}

where the pressure is $p=\frac{1}{2}\varrho^{2}.$   

A source of much research in mathematical physics is the problem of rigorously
deriving the  compressible Euler equation in dimensions $d\geq 2$.
One of the main open problems in  mechanics fluids  is
the derivation of the  stochastic  equations of fluids, the so-called  stochastic Euler and
Navier-Stokes equations, from the microscopic Hamiltonian dynamics. 
The main purpose of this work is to make the above formal derivation completely
rigorous. More precisely, we will provide quantitative estimation between solutions 
and the empirical measures.  The proof is based 
 on It\^o-Wentzell-Kunita formula for  Young integral, see \cite{CatuognoCastrequini},  and 
on   the paper \cite{Oes} where the  same equation   without noise  is considered.  
The noise in equation (\ref{eq2}) has a very special form, compared to general abstract models of stochastic partial differential equations (SPDEs). Our aim is not an
abstract generality We   point out that in (\ref{xk}) we consider $\alpha$-H\"older path but we believe
which can be easily extended for infinite noises. 	Concerning 
 the problem  of well-posedness of (\ref{eq1})-(\ref{eq2})  remains open,  see \cite{Fei}, \cite{Chae}, \cite{Chae2}
  \cite{Lions} and  \cite{Majda} on  well-posedness results in the deterministic setting.

\paragraph{Related Works.}

In \cite{Correa} we consider the case that the  Hamiltonian system is perturbed by the Brownian motion, for  
related works in the deterministic setting see  \cite{Carillo}, \cite{Dil} and \cite{Franz}. For particle 
approximations for stochastic  incompressible Euler-Navier stokes equations see \cite{Maurelli}  and \cite{Kotelenez}.
We also mention that  \cite{Crisan} derive a class of rough geophysical fluid dynamics 
models as critical points of rough action functionals using the theory of controlled rough paths. 
Finally an excellent recent monograph \cite{Jabin} where
some additional information can be found.

\paragraph{Example of H\"older  noise.}

Let $(B_{t}) _{t\in [0, T]}$ be a fractional Brownian motion with Hurst index $H\in (1/2, 1)$. Recall that $B$ is a centered Gaussian process with covariance
\begin{equation*}
  \mathbf{E} B_{t} B_{s}= \frac{1}{2} \left( t ^{2H}+ s^{2H}-\vert t-s\vert ^{2H} \right) \mbox{ for every }s,t\in [0,T].
\end{equation*}

This process  is self-similar of index $H$,  and their  paths are H\"older continuous of order $\alpha \in (0,H)$.

\section{ Definitions,  preliminaries and hypothesis.}

\subsection{Young Integral.}

Fix a time interval $[0,T]$. Let $U$, $V$ and $W$ be Banach spaces. Given a path $\phi:[0,T]\to V$ and $s,t\in[0,T]$ we write $\phi_{st}=\phi_{t}-\phi_{s}$.
	\begin{definition}
		Let $0<\alpha\leq 1$. 
		$\mathcal{C}^{\alpha}([0,T];V)$ is the space of functions on $[0,T]$ taking values in $V$ such that the following $\alpha$-H\"older seminorm 
		\begin{equation*}
			\|\phi\|_{\alpha}:=\sup_{0\leq s<t\leq T}\frac{|\phi_{st}|}{|t-s|^{\alpha}}.
		\end{equation*}
		is finite.
	\end{definition}
	The space $\mathcal{C}^{\alpha}([0,T];V)$ is a Banach space with the norm
	$$
	|\phi|_{\alpha}=\|\phi\|_{\alpha}+\|\phi\|_{\infty},
	$$
	where, as usual, $\|\phi\|_{\infty}=\sup_{t\in[0,T]}|\phi(t)|$.\\
	
	We will denote by $L(U,V)$ the space of continuous linear mappings from $U$ to $V$, and
	$$
	\Delta_{T}:=\left\{(s,t)\in[0,T]^{2}:0\leq s\leq t\leq T\right\}.
	$$
	In this section we define the Young's integral $\int XdY$ when $Y\in C^{\alpha}([0,T];V)$ and $X\in C^{\beta}([0,T];L(V,W))$ with $\alpha+\beta>1$. We refer the reader to the paper \cite{Young} by L.C. Young.\\
	The map
	\begin{equation*}
		(X,Y)\mapsto\left( t\mapsto\int_{0}^{t}X_{s}dY_{s}\right)
	\end{equation*}
is bilinear and continuous from $C^{\beta}([0,T];L(V,W))\times C^{\alpha}([0,T];V)$ to $C^{\alpha}([0,T];W)$.

Finally, the cornerstone of this theory is the following Young-Loeve estimative, see Proposition 3 in \cite{Gubinelli}.
	\begin{theorem}
		Let $Y\in C^{\alpha}([0,T];V)$ and $X\in C^{\beta}([0,T];L(V,W))$ for some $\alpha,\beta\in(0,1]$ with $\alpha+\beta>1$. Then the limit
		\begin{equation*}
			\int_{0}^{t}X_{r}dY_{r}:=\lim_{|\pi|\to 0}\sum_{[r,s]\in\pi}X_{r}Y_{rs}
		\end{equation*}
		exists for every $t\in[0,T]$, where the limit is taken over any $\pi\in\mathcal{P}([0,t])$, and $\mathcal{P}([0,t])$ the set of all partitions $\pi$ of the interval $[0,t]$. This limit is called the {\fontfamily{qmr}\selectfont Young integral} of $X$ against $Y$, which moreover holds the following estimative
		\begin{equation}\label{Loeve}
			\left|\int_{s}^{t}X_{r}dY_{r}-X_{s}Y_{st}\right|_{W}\leq C_{\alpha+\beta}\| Y\|_{\alpha}\| X\|_{\beta}|t-s|^{\alpha+\beta}
		\end{equation}
		for all $(s,t)\in\Delta_{T}.$
	\end{theorem}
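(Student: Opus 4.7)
I will follow the classical Young--Loeve coarsening strategy, which splits naturally into three steps: a telescoping identity obtained by removing one point from a partition, a pigeonhole argument yielding a partition-uniform bound, and a Cauchy argument producing the limit.

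Fix $(s,t) \in \Delta_T$. For a partition $\pi = \{s = r_0 < r_1 < \cdots < r_n = t\}$ set $S(\pi) := \sum_{i=0}^{n-1} X_{r_i} Y_{r_i r_{i+1}}$. If $\pi'$ is the coarser partition obtained by removing an interior node $r_j$, then using $Y_{r_{j-1} r_{j+1}} = Y_{r_{j-1} r_j} + Y_{r_j r_{j+1}}$ one checks the telescoping identity
\begin{equation*}
S(\pi) - S(\pi') = (X_{r_j} - X_{r_{j-1}}) Y_{r_j r_{j+1}},
\end{equation*}
which is bounded in $W$ by $\|X\|_\beta \|Y\|_\alpha (r_{j+1} - r_{j-1})^{\alpha+\beta}$.

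For any partition with $n \geq 2$ subintervals, a pigeonhole argument among the $n-1$ overlapping pairs $[r_{j-1}, r_{j+1}]$ produces an interior index $j$ satisfying $r_{j+1} - r_{j-1} \leq 2(t-s)/(n-1)$. Iteratively removing such points until only $\{s,t\}$ survives and summing the losses gives
\begin{equation*}
|S(\pi) - X_s Y_{st}|_W \leq \|X\|_\beta \|Y\|_\alpha \sum_{k=2}^{n} \left(\frac{2(t-s)}{k-1}\right)^{\alpha+\beta} \leq C_{\alpha+\beta} \|X\|_\beta \|Y\|_\alpha (t-s)^{\alpha+\beta},
\end{equation*}
with $C_{\alpha+\beta} := 2^{\alpha+\beta} \sum_{k=1}^{\infty} k^{-(\alpha+\beta)}$ finite precisely because $\alpha+\beta > 1$. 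This is the heart of the theorem and the single place where the hypothesis $\alpha+\beta > 1$ is used decisively.

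To upgrade the partition-uniform bound into a genuine limit, I compare refinements: for $\pi_1 \subset \pi_2$, applying the previous step on each subinterval of $\pi_1$ yields $|S(\pi_2) - S(\pi_1)|_W \leq C_{\alpha+\beta} \|X\|_\beta \|Y\|_\alpha (t-s) |\pi_1|^{\alpha+\beta-1}$, which vanishes as $|\pi_1| \to 0$ since $\alpha+\beta-1 > 0$. Any two partitions are dominated by their common refinement, so $\{S(\pi)\}$ is a Cauchy net in $W$; its limit is, by definition, $\int_s^t X_r\,dY_r$, and passing to the limit in the uniform bound yields (\ref{Loeve}). The only technically delicate moment is the pigeonhole-plus-coarsening step; once the telescoping identity is noticed, the remainder is routine.
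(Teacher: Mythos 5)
Your proof is correct. Note, however, that the paper does not actually prove this theorem: it simply cites Proposition 3 of Gubinelli's \emph{Controlling rough paths}, where the result is obtained as an instance of the sewing lemma. There, one considers the germ $\Xi_{st}=X_sY_{st}$, observes that its failure of additivity $\delta\Xi_{sut}=\Xi_{st}-\Xi_{su}-\Xi_{ut}=-(X_u-X_s)Y_{ut}$ is controlled by $\|X\|_\beta\|Y\|_\alpha|t-s|^{\alpha+\beta}$ with exponent $\alpha+\beta>1$, and invokes the abstract sewing map to produce the integral together with the bound \eqref{Loeve} in one stroke. Your argument is the classical, self-contained Young coarsening proof: the telescoping identity $S(\pi)-S(\pi')=(X_{r_j}-X_{r_{j-1}})Y_{r_jr_{j+1}}$ is exactly the coboundary $\delta\Xi$ above, the pigeonhole selection of a point with $r_{j+1}-r_{j-1}\le 2(t-s)/(n-1)$ and the convergent series $\sum_k k^{-(\alpha+\beta)}$ replace the dyadic argument inside the sewing lemma, and your refinement comparison $|S(\pi_2)-S(\pi_1)|\le C\|X\|_\beta\|Y\|_\alpha(t-s)|\pi_1|^{\alpha+\beta-1}$ correctly yields the Cauchy property for the mesh limit (not merely the refinement net), which is what the statement requires. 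The trade-off is the usual one: the sewing-lemma route is shorter once the abstract machinery is available and generalizes immediately to rough-path germs of higher order, while your elementary route makes the role of the hypothesis $\alpha+\beta>1$ and the shape of the constant $C_{\alpha+\beta}=2^{\alpha+\beta}\sum_{k\ge1}k^{-(\alpha+\beta)}$ completely explicit. Both are standard and both are complete; no gap.
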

	We recall the independence of the Young integral with respect to the choice of the partitioning points. Let $r\in[s,\theta]$ denote an arbitrary point in the interval $[s,\theta]$. The Young integral of $X$ against $Y$ is equal to the limit
	\begin{equation*}
		\int_{0}^{t}X_{s}dY_{s}:=\lim_{|\pi|\to 0}\sum_{[s,\theta]\in\pi}X_{r}Y_{s\theta}
	\end{equation*}
	for any $t\in[0,T]$.\\
	As the next lemma shows, Young integration satisfies the classical integration by parts formula.
	
	\begin{lemma}
		Let $X\in C^{\alpha}$ and $Y\in C^{\beta}$ for some $\alpha, \beta\in(0,1]$ with $\alpha+\beta>1$. Then
		\begin{equation}\label{produ}
			X_{T}Y_{T}=X_{0}Y_{0}+\int_{0}^{T}X_{u}dY_{u}+\int_{0}^{T}Y_{u}dX_{u}.
		\end{equation}
	\end{lemma}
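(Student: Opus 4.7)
The plan is to obtain \eqref{produ} by the standard telescoping argument combined with the Young--Loeve estimate \eqref{Loeve}. The key algebraic identity, valid for any two real-valued (or $V\otimes W$-valued after suitable pairing) paths, is
\begin{equation*}
X_t Y_t - X_s Y_s = X_s Y_{st} + Y_s X_{st} + X_{st} Y_{st}.
\end{equation*}
Summing this over the blocks $[s,t]$ of an arbitrary partition $\pi \in \mathcal{P}([0,T])$ produces a telescoping identity
\begin{equation*}
X_T Y_T - X_0 Y_0 = \sum_{[s,t]\in\pi} X_s Y_{st} + \sum_{[s,t]\in\pi} Y_s X_{st} + \sum_{[s,t]\in\pi} X_{st} Y_{st},
\end{equation*}
which is exact for every partition. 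The entire proof then reduces to passing to the limit $|\pi|\to 0$ on the right-hand side.

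By the definition of the Young integral recalled in the previous theorem, the first Riemann-type sum converges to $\int_0^T X_u\,dY_u$, since $X \in C^\alpha$, $Y \in C^\beta$, and $\alpha+\beta>1$; symmetrically, swapping the roles of $X$ and $Y$, the second sum converges to $\int_0^T Y_u\,dX_u$. What remains is to show that the quadratic remainder
\begin{equation*}
R(\pi) := \sum_{[s,t]\in\pi} X_{st} Y_{st}
\end{equation*}
vanishes in the limit. Using the H\"older bounds $|X_{st}|\leq \|X\|_\alpha |t-s|^\alpha$ and $|Y_{st}|\leq \|Y\|_\beta |t-s|^\beta$, one estimates
\begin{equation*}
|R(\pi)| \leq \|X\|_\alpha \|Y\|_\beta \sum_{[s,t]\in\pi} |t-s|^{\alpha+\beta} \leq \|X\|_\alpha \|Y\|_\beta\, T\, |\pi|^{\alpha+\beta-1},
\end{equation*}
and since $\alpha+\beta-1>0$ this tends to zero as $|\pi|\to 0$. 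Combining the three limits yields \eqref{produ}.

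There is essentially no obstacle: the only subtle point is verifying that the sum $\sum |t-s|^{\alpha+\beta}$ is indeed $o(1)$, which rests on the super-additivity exponent $\alpha+\beta>1$; this is precisely the condition under which Young integration is well-defined, so the remainder is tailor-made to be negligible. If one wishes to be entirely rigorous about the bilinear pairing in the Banach-valued case, one should fix an auxiliary continuous bilinear map $(x,y) \mapsto xy \in W$ in place of ordinary multiplication, but the argument is unchanged because the bilinear pairing is continuous and the estimate $|X_{st}Y_{st}|_W \leq C|X_{st}|\,|Y_{st}|$ still holds.
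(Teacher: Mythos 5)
Your proof is correct and complete: the pointwise identity $X_tY_t-X_sY_s=X_sY_{st}+Y_sX_{st}+X_{st}Y_{st}$, the telescoping over a partition, the identification of the two Riemann sums with the Young integrals of $X$ against $Y$ and of $Y$ against $X$, and the estimate $|R(\pi)|\leq \|X\|_{\alpha}\|Y\|_{\beta}\,T\,|\pi|^{\alpha+\beta-1}\to 0$ are all valid. The paper states this lemma without proof, treating it as a classical fact of Young integration, so there is nothing to compare against; your telescoping argument is the standard one.
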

\begin{lemma}
	Let $X\in C^{\alpha}([0,T],V)$ and $f\in C^{1+\gamma}(V,W)$ for some $\alpha, \gamma\in(0,1]$, such that $\alpha(1+\gamma)>1$. Then $\int_{0}^{T}Df(X_{r})dX_{r}$ is well-defined Young integral, and
	\begin{equation}\label{Ito}
		f(X_{T})=f(X_{0})+\int_{0}^{T}Df(X_{r})dX_{r}.
	\end{equation}
\end{lemma}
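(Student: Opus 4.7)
The plan is to establish the formula by Riemann-sum approximation, mirroring the proof of the classical change-of-variables theorem but using Young's estimate to control the error.

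First, I would verify that the Young integral $\int_0^T Df(X_r)\,dX_r$ is well defined. Since $Df \in C^\gamma(V, L(V,W))$ and $X \in C^\alpha([0,T],V)$, the composition $r \mapsto Df(X_r)$ belongs to $C^{\alpha\gamma}([0,T]; L(V,W))$ with seminorm controlled by $\|Df\|_{C^\gamma} \|X\|_\alpha^\gamma$. Under the hypothesis $\alpha(1+\gamma) > 1$ we have $\alpha + \alpha\gamma > 1$, which is exactly the condition needed to invoke the Young--Lo\`eve theorem stated above, so $\int_0^t Df(X_r)\,dX_r$ exists and satisfies estimate \eqref{Loeve}.

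Next, I would fix a partition $\pi = \{0 = t_0 < t_1 < \cdots < t_n = T\}$ and telescope:
\begin{equation*}
f(X_T) - f(X_0) = \sum_{i=0}^{n-1}\bigl[f(X_{t_{i+1}}) - f(X_{t_i})\bigr].
\end{equation*}
Applying the first-order Taylor expansion for $f \in C^{1+\gamma}$ gives, for each $i$,
\begin{equation*}
f(X_{t_{i+1}}) - f(X_{t_i}) = Df(X_{t_i})(X_{t_{i+1}} - X_{t_i}) + R_i,
\end{equation*}
with remainder $|R_i|_W \leq \|Df\|_{C^\gamma}\, |X_{t_{i+1}} - X_{t_i}|^{1+\gamma} \leq \|Df\|_{C^\gamma}\, \|X\|_\alpha^{1+\gamma}\, |t_{i+1} - t_i|^{\alpha(1+\gamma)}$. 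Summing over $i$,
\begin{equation*}
\sum_{i=0}^{n-1} |R_i|_W \leq \|Df\|_{C^\gamma}\, \|X\|_\alpha^{1+\gamma}\, T\, |\pi|^{\alpha(1+\gamma)-1},
\end{equation*}
which vanishes as $|\pi| \to 0$ because $\alpha(1+\gamma) - 1 > 0$.

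Finally, by the definition of the Young integral recalled just above the statement, the Riemann-type sum $\sum_i Df(X_{t_i})(X_{t_{i+1}} - X_{t_i})$ converges to $\int_0^T Df(X_r)\,dX_r$ as $|\pi| \to 0$. Combining both limits yields identity \eqref{Ito}. The only delicate point, and the main step to be careful about, is the Taylor remainder bound in the H\"older regime $\gamma \in (0,1)$: one must use the characterization $|f(y) - f(x) - Df(x)(y-x)|_W \leq \|Df\|_{C^\gamma}|y-x|^{1+\gamma}$, which follows from integrating $Df$ along the segment $[x,y]$ and using its $\gamma$-H\"older continuity; everything else reduces to the super-additivity condition $\alpha(1+\gamma) > 1$ that makes the error terms summable.
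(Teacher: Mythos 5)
Your proof is correct. The paper states this lemma without proof (it is a standard fact of Young integration theory), and your argument --- telescoping, first-order Taylor expansion with the $C^{1+\gamma}$ remainder bound $|f(y)-f(x)-Df(x)(y-x)|_W\leq \|Df\|_{C^{\gamma}}|y-x|^{1+\gamma}$, summability of the errors since $\alpha(1+\gamma)>1$, and identification of the surviving Riemann sum with the Young integral of $Df(X_{\cdot})\in C^{\alpha\gamma}$ against $X\in C^{\alpha}$ --- is exactly the standard way to establish it, with all hypotheses used where they are needed.
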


	We finish this section with the generalized It\^o-Wentzell formula for the Young integral, see Theorem 3.1 of \cite{CatuognoCastrequini}.
	
\mm{
	\begin{theorem}
		Let $\alpha\in(\frac{1}{2},1]$, $Y\in C^{\alpha}([0,T];V)$ and $h:[0,T]\times U\to L(V,W)$ continuous and differentiable in $U$ such that
		\begin{enumerate}
			\item  $(t,x)\to D_{x}h_{t}(x)$ is continuous,
			\item $h\in C(U,C^{\beta}([0,T];L(V,W)))$ for some $\beta\in(\frac{1}{2},1]$.
		\end{enumerate}
		Let
		\[
			g_{t}(x)=g_{0}(x)+\int_{0}^{t}h_{s}(x)dY_{s}.		
		\]
		We assume that $g:[0,T]\times U\to W$ is twice differentiable in $U$, the functions $(t,x)\mapsto D_{x}^{2}g_{t}(x)$ are continuous and 
 $D_{x}g\in C(U,C^{\gamma}([0,T],L(V,W)))$ for some $\gamma\in(\frac{1}{2},1]$.  Then for any $X\in C^{\alpha}([0,T],U)$,
		\begin{equation}\label{Ito-W}
			g_{t}(X_{t})=g_{0}(X_{0})+\int_{0}^{t}h_{s}(X_{s})dY_{s}+\int_{0}^{t}D_{x}g_{s}(X_{s})dX_{s},
		\end{equation}
where  the integral $\int_{0}^{t}D_{x}g_{s}(X_{s})dX_{s}$ is a Young integral, and 
\begin{align*}
t\mapsto g_{t}(X_{t})\in C^{mim(\alpha \wedge \lambda )}([0,T],W).
\end{align*}
	\end{theorem}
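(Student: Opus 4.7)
The plan is to prove the identity by a standard Riemann-sum argument, using the Young--Loève estimate \eqref{Loeve} to control all error terms. Fix $t\in[0,T]$ and a partition $\pi=\{0=t_0<t_1<\cdots<t_n=t\}$. I would begin from the telescoping identity
\begin{equation*}
g_t(X_t)-g_0(X_0)=\sum_{i=0}^{n-1}\bigl[g_{t_{i+1}}(X_{t_{i+1}})-g_{t_i}(X_{t_i})\bigr],
\end{equation*}
and split each increment into a \emph{time} part $A_i:=g_{t_{i+1}}(X_{t_{i+1}})-g_{t_i}(X_{t_{i+1}})$ and a \emph{space} part $B_i:=g_{t_i}(X_{t_{i+1}})-g_{t_i}(X_{t_i})$.

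For $A_i$, the definition of $g$ gives $A_i=\int_{t_i}^{t_{i+1}}h_s(X_{t_{i+1}})\,dY_s$; then \eqref{Loeve} yields $A_i=h_{t_i}(X_{t_{i+1}})\,Y_{t_it_{i+1}}+R^{(1)}_i$ with $|R^{(1)}_i|\lesssim |t_{i+1}-t_i|^{\alpha+\beta}$. A further replacement of $X_{t_{i+1}}$ by $X_{t_i}$, permissible because of hypothesis (1) and the local boundedness of $D_x h$ along the compact image of $X$, produces an error $R^{(2)}_i$ of order $|Y_{t_it_{i+1}}|\cdot|X_{t_it_{i+1}}|=O(|t_{i+1}-t_i|^{2\alpha})$. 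For $B_i$, a Taylor expansion based on the assumed $C^2$-regularity of $g_{t_i}$ in $x$ gives $B_i=D_xg_{t_i}(X_{t_i})\,X_{t_it_{i+1}}+R^{(3)}_i$ with $|R^{(3)}_i|\lesssim |X_{t_it_{i+1}}|^2=O(|t_{i+1}-t_i|^{2\alpha})$.

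Since $\alpha+\beta>1$ and $2\alpha>1$, all the remainders $R^{(k)}_i$ aggregate to zero as $|\pi|\to 0$. The surviving Riemann sums
\begin{equation*}
\sum_i h_{t_i}(X_{t_i})\,Y_{t_it_{i+1}}\quad\text{and}\quad\sum_i D_xg_{t_i}(X_{t_i})\,X_{t_it_{i+1}}
\end{equation*}
are precisely the defining partial sums of the Young integrals $\int_0^t h_s(X_s)\,dY_s$ and $\int_0^t D_xg_s(X_s)\,dX_s$, respectively, so identifying the limit yields \eqref{Ito-W}. The Hölder regularity assertion for $t\mapsto g_t(X_t)$ then follows by applying \eqref{Loeve} to each of the two integrals on the right-hand side.

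The main obstacle I anticipate is the careful verification that the composed integrands $s\mapsto h_s(X_s)$ and $s\mapsto D_xg_s(X_s)$ have sufficient temporal Hölder regularity ($\min(\alpha,\beta)$ and $\min(\alpha,\gamma)$, both strictly above $1/2$) for the Young integrals on the right-hand side to be well-defined and to be the actual limits of the above Riemann sums. Hypotheses (1), (2) and the $C^\gamma$-continuity of $D_xg$ only give joint time--space regularity pointwise in $x$; one has to promote this to a uniform estimate along the path of $X$, which is where the continuity of the $x$-dependence and the compactness of $X([0,T])$ must be invoked. Once this uniform estimate is in hand, the rest of the argument consists of routine telescoping and error accounting via \eqref{Loeve}.
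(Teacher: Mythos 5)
The paper does not actually prove this theorem: it is quoted verbatim (as Theorem 3.1) from Castrequini--Catuogno \cite{CatuognoCastrequini}, whose proof is precisely the telescoping/Riemann-sum argument with Young--Lo\`eve remainder estimates that you outline. Your decomposition of each increment into a time part and a space part, the $O(|t_{i+1}-t_i|^{\alpha+\beta})$ and $O(|t_{i+1}-t_i|^{2\alpha})$ error bounds with $\alpha+\beta>1$ and $2\alpha>1$, and your identification of the key technical point --- upgrading the pointwise-in-$x$ H\"older hypotheses to a uniform estimate along the compact image of $X$ so that $s\mapsto h_s(X_s)$ and $s\mapsto D_xg_s(X_s)$ lie in $C^{\min(\alpha,\beta)}$ and $C^{\min(\alpha,\gamma)}$ respectively --- are all correct and match that argument.
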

}

\subsection{Space of functions }

Let us first define a dyadic partition of unity, as follows: we consider two 
$C_{0}^{\infty}(\R^{d})$-functions $\chi$ 
 and $\varphi$  which take values in $[0,1]$ and satisfy the following: there exists $\lambda \in (1,\sqrt 2)$ such that
\[
\mathrm{Supp}\; \chi=\left\{   |\xi |\leq\lambda  \right\} \qquad \text{and} \qquad \mathrm{Supp}  \ \varphi=\left\{  \frac{1}{\lambda} \leq|\xi|\leq\lambda  \right\}.
\]
Moreover, with the following notations,  
\[
\varphi_{-1}(\xi):=\chi(\xi), \qquad
\varphi_{i}(\xi):=\varphi(2^{-i}\xi), \ \text{for any } i\geq 0,
\]
the sequence $\{\varphi_i\}$ satisfies
\begin{align*}
& \mathrm{Supp} \ \varphi_{i}\cap \mathrm{Supp} \ \varphi_{j}=\emptyset	 \quad \text{ if } \ |i-j|>1,  \\
& \sum_{i\geq-1} \varphi_{i}(\xi)=1, \quad \text{ for any } 
x \in\R^{d}.
\end{align*}

 Take a fixed dyadic partition of unity $\{ \varphi_{i} \}$ with its inverse Fourier transforms 
$  \{  \check{\varphi}_{i}   \}$. 
For $u\in \mathcal{S}'(\R^{d})$
, the nonhomogeneous dyadic  blocks are defined  as
\[
\Delta_{i}\equiv 0, \quad \text{ if } i< -1, \qquad \text{and} \qquad 
\Delta_{i}u=\check{\varphi}_{i}\ast u, \quad \text{ if  }  i\geq -1.
\]
The partial sum of dyadic blocks is defined as a nonhomogeneous low frequency cut-off
operator:
\[
S_{j}:=\sum_{i\leq j-1} \Delta_{i}. 
\]

Having the theses smooth resolution of unity we are able 
to introduce the Triebel-Lizorkin and Besov Space.

\begin{definition}  Let $s\in \R$ and $1< p<\infty$. 

\begin{enumerate}

\item  If $1< q< \infty$, then

\[
F_{p,q}^{s}= \left\{  f\in \mathcal{S}^{\prime}(\R^{d}) : \    \big\| \| (2^{js} \Delta_{j}f)_{j \in \mathbb{Z} } \|_{l^q(\mathbb{Z})}  \big\|_{\mathit{L}^{p}(\R^{d})}< \infty \right\}.
\]

\item If $0< q\leq \infty$, then

\[
B_{p,q}^{s}= \left\{  f\in \mathcal{S}^{\prime}(\R^{d}) : \  \big\| (2^{js}\| \Delta_{j}f \|_{L^p} )_{j \in \mathbb{Z} } \big\|_{\mathit{l}^{q}(\mathbb{Z})}< \infty \right\}.
\]
\end{enumerate}

\end{definition}

The spaces $F_{p,q}^{s}$  and $B_{p,q}^{s}$are independent of the dyadic partition  chosen, see \cite{Triebel}. We denoted 
$E_{p,q}^{s}=F_{p,q}^{s}$ or $E_{p,q}^{s}=B_{p,q}^{s}$ with $q\geq 2$. We recall the Sobolev embedding

\begin{equation}\label{Sobolev}
E_{p,q}^{s}\subset C_{b}^{s-\frac{d}{p}}
\end{equation}

if $s> \frac{d}{p}$, see p. 203 of \cite{Triebel}.

We denoted $C_{b}(\R^{d}; \R^{d})$  is the space of bounded continuous $\R^{d}$-valued functions on $\R^{d}$.

\subsection{Definition of solution }

We say that $X_{t}^{i,N}$ solve the    system (\ref{xk}) if it  is solution of the system
\begin{align}\label{VN}
\left\{
\begin{array}{lc}
d X_{t}^{k,N}=V_{t}^{k,N}dt
\\[0.3cm]
d V_{t}^{k,N}=-\nabla\left(S_{t}^{N}\ast\phi_{N}\right)(X_{t}^{k,N})dt+  \sigma (t, X_{t}^{k,N})  d Y_{t}\quad k=1,\cdots, N
\end{array}
\right.
\end{align}

where the last integral is taken in the Young sense.

  \begin{definition}
	Let  $\varrho_{0}\in E_{p,q}^{s}\cap L^{1}(\R^{d})$ with $\varrho_{0}>0$, and $\upsilon_{0}\in  E_{p,q}^{s}$ with   $s>\frac{d}{p}+2$.
Then  $(\varrho, \upsilon)$   is called a solution of (\ref{eq2}) if the following conditions are satisfied

\begin{enumerate}
    \item[(i)] $\left(\varrho, \upsilon\right)\in C([0,T], E_{p,q}^{s} ) \times  C^{\alpha}([0,T], E_{p,q}^{s} )$

\item[(ii)]  holds

$$
\begin{aligned}
&\varrho\left(t \right)=\varrho_{0}-\int_{0}^{t} \operatorname{div}_{x}(\varrho\upsilon)\mathrm{d} s, \\
&\upsilon_{q}\left(t\right)=\upsilon_{q,0}-\int_{0}^{t} \left(\frac{1}{\varrho} \nabla^{q}p+\upsilon  \nabla  \upsilon_{q} \right)\mathrm{d} s+\int_{0}^{t} \sigma_{q}   dY_{t}^{q}\quad q=1,\cdots, d
\end{aligned}
$$
for all $t \in[0, T]$.
\end{enumerate}

  \end{definition}

\subsection{ Technical hypothesis}

Next, we suppose that $\phi_{1}$ can be written as a convolution product 

 $$\phi_{1}=\phi_{1}^{r}\ast\phi_{1}^{r}$$

 where  $\phi_{1}^{r}\in C_{b}^{2}(\R^{d})$  and it is symmetric probability density.

\begin{definition}
For all  $q\in\{1,\cdots,d\}$  we define the function 
\begin{eqnarray*}
U_{1;\alpha}^{q}:\R^{d}&\longrightarrow&\R\\
x&\longmapsto&(-1)^{1+|\alpha|} \dfrac{x^{\alpha}}{\alpha!} \nabla^{q}\phi_{1}^{r}(x)
\end{eqnarray*}
where  $0\leq|\alpha|\leq L+1$ with $L:=\left[\frac{d+2}{2}\right]$.
\end{definition}

We assume that functions $\phi_{1}^{r}$ and $U_{1;\alpha}^{q}$ satisfy

\begin{eqnarray}\label{c1}
|\phi_{1}^{r}(x)|\leq\frac{C}{1+|x|^{d+2}}\qquad |x|\geq1,
\end{eqnarray}
\begin{eqnarray}\label{cotawildeu}
|\widetilde{U^{q}_{1;\alpha}}(\lambda)|\leq C|\widetilde{\phi_{1}^{r}}(\lambda)|,\qquad\text{para } 1\leq|\alpha|\leq L ,
\end{eqnarray}

and

\begin{eqnarray}\label{cotauj}
|U^{q}_{1;\alpha}(x)|\leq C \left(\frac{1}{1+|x|^{d+1}}\right)^{1/2},\qquad\text{para } |\alpha|=L+1\ .
\end{eqnarray}

We also assume that $\sigma\in  C^{\alpha}([0,T],  C_{b}^{1}( \R^{d}, \R^{d}))$.

\section{Result}

First we present a simple lemma. 

\begin{lemma}
We assume \eqref{c1}, then there exist $C_{0}>0$ such that
    \begin{eqnarray}\label{cotafNbeta}
\Vert f-(f\ast\phi_{N}^{r})\Vert_{\infty}&\leq&C_{0}N^{-\beta/d}\Vert\nabla f\Vert_{\infty}\qquad\text{for all}\,\,\,\,\, f\in C_{b}^{1}(\R^{d})
\end{eqnarray}
\end{lemma}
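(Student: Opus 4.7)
The plan is straightforward and relies on the fact that $\phi_N^r$ inherits from $\phi_1^r$ the property of being a symmetric probability density, together with the explicit rescaling. First, since $\int_{\R^d}\phi_N^r(y)\,dy = 1$, I would write
\begin{equation*}
f(x) - (f\ast\phi_N^r)(x) \;=\; \int_{\R^d} \bigl(f(x) - f(x-y)\bigr)\,\phi_N^r(y)\,dy.
\end{equation*}
Applying the mean value theorem to $f\in C_b^1(\R^d)$ gives $|f(x)-f(x-y)|\leq \|\nabla f\|_\infty |y|$, so taking the supremum over $x$ reduces the lemma to estimating the first absolute moment $\int_{\R^d}|y|\,\phi_N^r(y)\,dy$.

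Next, I would make the rescaling of $\phi_N^r$ explicit. Since $\phi_1 = \phi_1^r\ast\phi_1^r$ and $\phi_N(x) = N^{\beta}\phi_1(N^{\beta/d}x)$, a short dilation computation on the convolution product forces $\phi_N^r(x) = N^{\beta}\phi_1^r(N^{\beta/d}x)$; this $\phi_N^r$ is again a symmetric probability density, and one checks directly that $\phi_N^r\ast\phi_N^r = \phi_N$. Changing variables $u = N^{\beta/d}y$ then yields
\begin{equation*}
\int_{\R^d} |y|\,\phi_N^r(y)\,dy \;=\; N^{-\beta/d}\int_{\R^d} |u|\,\phi_1^r(u)\,du,
\end{equation*}
which is exactly where the factor $N^{-\beta/d}$ emerges.

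Finally, I would check that the remaining integral $C_0 := \int_{\R^d}|u|\phi_1^r(u)\,du$ is finite. The decay assumption \eqref{c1} gives $|u|\,\phi_1^r(u)\lesssim |u|^{-d-1}$ on $\{|u|\geq 1\}$, which is integrable in $\R^d$, while on $\{|u|<1\}$ the integrand is bounded because $\phi_1^r\in C_b^2(\R^d)$. Combining the three steps yields the stated bound $\|f-f\ast\phi_N^r\|_\infty \leq C_0\,N^{-\beta/d}\|\nabla f\|_\infty$.

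The proof is essentially routine; the only point that really requires a small verification is the identification $\phi_N^r(x) = N^{\beta}\phi_1^r(N^{\beta/d}x)$, which is the only place where the structural hypothesis $\phi_1 = \phi_1^r\ast\phi_1^r$ and the scaling \eqref{molli} enter, and this is precisely what produces the rate $N^{-\beta/d}$. Beyond that, the argument is just a first-order Taylor estimate against a probability kernel concentrating at scale $N^{-\beta/d}$.
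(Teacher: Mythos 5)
Your proof is correct and is the standard argument one would expect here: the paper itself gives no proof, deferring entirely to \cite{Correa}, and your three steps (rewriting $f-f\ast\phi_N^r$ against the probability kernel, the mean value theorem, and the change of variables $u=N^{\beta/d}y$ combined with the moment bound from \eqref{c1} and the boundedness of $\phi_1^r$ near the origin) are exactly what that reference's argument amounts to. Your identification $\phi_N^r(x)=N^{\beta}\phi_1^r(N^{\beta/d}x)$ is the scaling implicitly used throughout the paper, and your verification that it is a symmetric probability density with $\phi_N^r\ast\phi_N^r=\phi_N$ is sound.
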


\begin{proof}
See \cite{Correa}
\end{proof}

We define  the energy

\begin{equation}\label{QN}
Q_{t}^{N}:=\frac{1}{N}\sum_{k=1}^{N}|V_{t}^{k,N}-\upsilon(X_{t}^{k,N},t)|^{2} +\Vert S_{t}^{N}\ast\phi_{N}^{r}-\varrho(.,t)\Vert_{L^{2}(\R^{d})}^{2}.
\end{equation}

\begin{theorem}\label{tomeuler}
Let $s\geq\frac{d}{p}+3$, $1< p,q< \infty$ $\alpha> \frac{1}{2}$,  $\eta > \frac{d}{2} +1$.  We assume \eqref{molli}, \eqref{c1}-\eqref{cotauj}.
Then there exist  $C_{T}>0$ such that

\begin{equation*}
 Q_{t}^{N}\leq C_{T}( Q_{0}^{N}+N^{-\beta/d})\quad\text{for all } N\in\N,
\end{equation*}

\begin{equation*}
\|S_{t}^{N}- \varrho_{t}\|_{E_{2,\hat{q}}^{-\eta}}^{2} \leq C_{T}( Q_{0}^{N}+N^{-\beta/d}) \quad\text{for all } N\in\N,
\end{equation*}

and

\begin{equation*}
  \|V_{t}^{N}- (\varrho \upsilon)_{t}\|_{E_{2,\hat{q}}^{\mm{-\eta}}} \leq C_{T} ( Q_{0}^{N} +N^{-\beta/d}) \quad\text{for all } N\in\N,
\end{equation*}

with  $\frac{1}{q} +\frac{1}{\hat{q}}=1$.

\end{theorem}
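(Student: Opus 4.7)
The plan is to adapt the Oelschl\"ager energy method from \cite{Oes} for the deterministic compressible Euler limit, using the It\^o--Wentzell--Kunita formula \eqref{Ito-W} to absorb the H\"older driver. The central observation is that the particle equation \eqref{VN} and the fluid equation \eqref{eq2} are driven by the \emph{same} path $Y$ with the \emph{same} coefficient $\sigma$. Applying the It\^o--Wentzell--Kunita formula to $\upsilon(X_t^{k,N},t)$ and subtracting from $V_t^{k,N}$, the Young contributions $\int_0^t \sigma(s,X_s^{k,N})\,dY_s$ cancel exactly, so the difference
\begin{equation*}
V_t^{k,N}-\upsilon(X_t^{k,N},t)=V_0^{k,N}-\upsilon_0(X_0^{k,N})+\int_0^t f_s^{k,N}\,ds
\end{equation*}
has only a Lebesgue component, where $f_s^{k,N}$ collects the drift from \eqref{VN} and the non-noise part of \eqref{eq2} evaluated along the particle, plus the chain-rule term $-\nabla\upsilon(X_s^{k,N},s)\,V_s^{k,N}$. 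Analogously, $(S_t^N\ast\phi_N^r)(x)-\varrho(x,t)$ carries no Young component, since the smoothed empirical density evolves in $t$ purely through the particle positions and the continuity equation is deterministic.

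Since both differences are of bounded variation in time, the classical chain rule applies to both summands of $Q_t^N$. Following \cite{Oes}, one uses $\phi_N=\phi_N^r\ast\phi_N^r$ to rewrite
\begin{equation*}
\nabla(S_s^N\ast\phi_N)(X_s^{k,N})=\bigl(\nabla(S_s^N\ast\phi_N^r)\ast\phi_N^r\bigr)(X_s^{k,N}),
\end{equation*}
and replaces one copy of $S_s^N\ast\phi_N^r$ by $\varrho(\cdot,s)$ at the cost of an $L^2$ error controlled by $Q_s^N$. The remaining $\nabla(\varrho\ast\phi_N^r)$ is replaced by $\nabla\varrho$ using \eqref{cotafNbeta}, at the cost of $C N^{-\beta/d}$ times bounded derivatives of $\varrho$; the pressure relation $p=\frac{1}{2}\varrho^2$ then gives $\frac{1}{\varrho}\nabla p=\nabla\varrho$, so most of $f^{k,N}$ telescopes against the corresponding $-\nabla\upsilon\,V^{k,N}$ term. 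All pointwise derivatives of $\varrho,\upsilon$ are bounded via \eqref{Sobolev} and $s\geq d/p+3$. A parallel computation for the $L^2$ density term produces cross terms matching the velocity ones, and after collecting one obtains
\begin{equation*}
Q_t^N\le Q_0^N+C\int_0^t Q_s^N\,ds+C\,N^{-\beta/d},
\end{equation*}
from which the first estimate follows by Gronwall.

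For the negative-index bounds one dualises against test functions $f\in E_{2,q}^{\eta}$ with $\eta>d/2+1$ (so $f\in C_b^1$ via \eqref{Sobolev}). Writing
\begin{equation*}
\langle S_t^N-\varrho_t,f\rangle=\langle S_t^N,f-f\ast\phi_N^r\rangle+\langle S_t^N\ast\phi_N^r-\varrho_t,f\rangle,
\end{equation*}
the first bracket is bounded by $C N^{-\beta/d}\|f\|_{E_{2,q}^\eta}$ by \eqref{cotafNbeta}, and the second by Cauchy--Schwarz yields $\sqrt{Q_t^N}\|f\|_{L^2}$. Taking the supremum over $\|f\|_{E_{2,q}^{\eta}}\le 1$ and invoking the first assertion gives the stated bound on $\|S_t^N-\varrho_t\|_{E_{2,\hat q}^{-\eta}}$. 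For $V_t^N-(\varrho\upsilon)_t$ the decomposition
\begin{equation*}
V_t^N=\frac{1}{N}\sum_{k=1}^N\upsilon(X_t^{k,N},t)\delta_{X_t^{k,N}}+\frac{1}{N}\sum_{k=1}^N\bigl(V_t^{k,N}-\upsilon(X_t^{k,N},t)\bigr)\delta_{X_t^{k,N}}
\end{equation*}
reduces the first piece to the empirical-measure estimate applied to $\upsilon(\cdot,t)f(\cdot)$, using smoothness of $\upsilon$, while the second is bounded in $E_{2,\hat q}^{-\eta}$ by $\sqrt{Q_t^N}\|f\|_\infty$ through Cauchy--Schwarz on the $N$ particles.

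The main obstacle will be the careful book-keeping in the energy computation: the moderate-interaction identity $\phi_N=\phi_N^r\ast\phi_N^r$ produces only $L^2$-type cancellations, so all errors from replacing $S_s^N\ast\phi_N^r$ by $\varrho$ must be grouped to absorb into $Q_s^N+N^{-\beta/d}$. In the Young-path setting one must additionally check that no classical pointwise time derivative is applied to an object that is merely $C^\alpha$ in $t$: the exact cancellation of $\sigma\,dY$ inside $V_t^{k,N}-\upsilon(X_t^{k,N},t)$ is what makes this possible, and the whole energy inequality must be handled in integral form along $s\in[0,t]$ rather than pointwise in $t$, since the residual drift $\frac{1}{\varrho}\nabla p+\upsilon\cdot\nabla\upsilon$ inherits only the $C^\alpha$ regularity of $\upsilon$.
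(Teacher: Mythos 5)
Your proposal follows essentially the same route as the paper: an energy estimate on $Q_t^N$ in which the It\^o--Wentzell--Kunita formula makes all Young contributions cancel, followed by Oelschl\"ager's moderate-interaction estimates ($\phi_N=\phi_N^r\ast\phi_N^r$, the mollification bound \eqref{cotafNbeta}), Gronwall, and a duality argument with the Sobolev embedding \eqref{Sobolev} for the negative-index norms. The only difference is organizational --- you cancel the $\sigma\,dY$ terms at the level of the difference $V_t^{k,N}-\upsilon(X_t^{k,N},t)$ before squaring, whereas the paper expands $Q_t^N$ into six terms, differentiates each, and observes the cancellation afterwards --- but the resulting drift terms and estimates are identical.
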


\begin{proof}
We recall that by  Sobolev embedding $\varrho, \upsilon\in C([0,T], C_{b}^{3}(\R^{d}))$, we use this regularity  during the proof. 
We first observe  that

\begin{align*}
Q_{t}^{N}=&\dfrac{1}{N}\sum_{k=1}^{N}\vert V_{t}^{k,N}\vert^{2}-\dfrac{2}{N}\sum_{k=1}^{N} V_{t}^{k,N}\cdot\upsilon(X_{t}^{k,N},t)\\
&+\dfrac{1}{N}\sum_{k=1}^{N}\vert\upsilon(X_{t}^{k,N},t)\vert^{2}+\Vert S_{t}^{N}\ast\phi_{N}^{r}\Vert_{L^{2}(\R^{d})}^{2}-2\langle  S_{t}^{N}\ast\phi_{N}^{r},\varrho(.,t)\rangle\\
&+\Vert\varrho(.,t)\Vert_{\mm{L^{2}(\R^{d})}}^{2}\\
=&\dfrac{1}{N}\sum_{k=1}^{N}\vert V_{t}^{k,N}\vert^{2}-\dfrac{2}{N}\sum_{k=1}^{N} V_{t}^{k,N}\cdot\upsilon(X_{t}^{k,N},t)\\
&+\dfrac{1}{N}\sum_{k=1}^{N}\vert\upsilon(X_{t}^{k,N},t)\vert^{2}+\dfrac{1}{N^{2}}\sum_{k,l=1}^{N}\phi_{N}(X_{t}^{k,N}-X_{t}^{l,N})\\
&-\dfrac{2}{N}\sum_{k=1}^{N}\left(\varrho(.,t)\ast\phi_{N}^{r}\right)(X_{t}^{k,N})+\Vert\varrho(.,t)\Vert_{L^{2}(\R^{d})}^{2}.
\end{align*}

We will calculate  the differential of each term in the sum of $Q_{t}^{N}$ via chain rules for the Young integral. 
By formula (\ref{Ito}) we deduce  

\begin{align*}
d\left(\dfrac{1}{N}\sum_{k=1}^{N}\vert V_{t}^{k,N}\vert^{2}\right)=&
\dfrac{1}{N}\sum_{k=1}^{N}d\left(\vert V_{t}^{k,N}\vert^{2}\right)=\dfrac{1}{N}\sum_{k=1}^{N}2V_{t,q}^{k,N} d(V_{t,q}^{k,N})\\
=&-\dfrac{2}{N}\sum_{k=1}^{N}V_{t,q}^{k,N}\nabla^{q}\left(S_{t}^{N}\ast\phi_{N}\right)(X_{t}^{k,N})dt\\
&+\dfrac{2}{N}\sum_{k=1}^{N}V_{t,q}^{k,N}  \sigma_{q}(t,X_{t}^{k,N}) d Y_{t}^{q}.
\end{align*}

By  formulas (\ref{produ}) and (\ref{Ito-W}) we have 

\begin{align*}
&d\left(-\dfrac{2}{N}\sum_{k=1}^{N} V_{t}^{k,N}\cdot\upsilon(X_{t}^{k,N},t)\right)
=-\dfrac{2}{N}\sum_{k=1}^{N}d[ V_{t,q}^{k,N}\upsilon_{q}(X_{t}^{k,N},t)]\\
=&-\dfrac{2}{N}\sum_{k=1}^{N}\upsilon_{q}(X_{t}^{k,N},t) d(V_{t,q}^{k,N})-\dfrac{2}{N}\sum_{k=1}^{N}V_{t,q}^{k,N}  d(\upsilon_{q}(X_{t}^{k,N},t))\\
=&\dfrac{2}{N}\sum_{k=1}^{N}\upsilon(X_{t}^{k,N},t)\cdot\nabla\left(S_{t}^{N}\ast\phi_{N}\right)(X_{t}^{k,N})dt+\dfrac{2}{N}\sum_{k=1}^{N}V_{t,q}^{k,N}\upsilon(X_{t}^{k,N},t)\cdot\nabla\upsilon_{q}(X_{t}^{k,N},t)dt\\
&+\dfrac{2}{N}\sum_{k=1}^{N}V_{t,q}^{k,N}\nabla^{q}\varrho(X_{t}^{k,N},t)dt-\dfrac{2}{N}\sum_{k=1}^{N}V_{t,q}^{k,N}\nabla\upsilon_{q}(X_{t}^{k,N},t)\cdot V_{t}^{k,N}dt\\
&-\dfrac{2}{N}\sum_{k=1}^{N}\upsilon_{q}(X_{t}^{k,N},t) \sigma_{q}(t,X_{t}^{k,N})   dY_{t}^{q}\\
&- \dfrac{2}{N}\sum_{k=1}^{N}\ V_{t,q}^{k,N}  \sigma_{q}(t,X_{t}^{k,N})  dY_{t}^{q}.\\
\end{align*}

 Again by  formulas (\ref{Ito}) and (\ref{Ito-W}) we obtain

\begin{align*}
&d\left(\dfrac{1}{N}\sum_{k=1}^{N}\vert\upsilon(X_{t}^{k,N},t)\vert^{2}\right)=\dfrac{2}{N}\sum_{k=1}^{N}\upsilon_{q}(X_{t}^{k,N},t)\ d(\upsilon_{q}(X_{t}^{k,N},t))\\
=&-\dfrac{2}{N}\sum_{k=1}^{N}\upsilon_{q}(X_{t}^{k,N},t)\upsilon(X_{t}^{k,N},t)\cdot\nabla\upsilon_{q}(X_{t}^{k,N},t) dt -\dfrac{2}{N}\sum_{k=1}^{N}\upsilon_{q}(X_{t}^{k,N},t)\nabla^{q}\varrho(X_{t}^{k,N},t)dt\\
 &+\dfrac{2}{N}\sum_{k=1}^{N}\upsilon_{q}(X_{t}^{k,N},t) \sigma_{q}(t,X_{t}^{k,N}) dY_{t}^{q}\\
&+\dfrac{2}{N}\sum_{k=1}^{N}\upsilon_{q}(X_{t}^{k,N},t)\nabla \upsilon_{q}(X_{t}^{k,N},t)\cdot V_{N}^{k}(t)dt.
\end{align*}

By Leibnitz rule simple manipulations we can deduce

\begin{align*}
& d\left(\dfrac{1}{N^{2}}\sum_{k,l=1}^{N}\phi_{N}(X_{t}^{k,N}-X_{t}^{l,N})\right)\\
=&\dfrac{1}{N^{2}}\sum_{k,l=1}^{N}d\left(\phi_{N}(X_{t}^{k,N}-X_{t}^{l,N})\right)\\
=&\dfrac{1}{N^{2}}\sum_{k,l=1}^{N}\nabla^{q}\phi_{N}(X_{t}^{k,N}-X_{t}^{l,N}) d \left(X_{t,q}^{k,N}-X_{t,q}^{l,N} \right) \\
&+ \dfrac{1}{N^{2}}\sum_{k,l=1}^{N}\nabla^{q}\phi_{N}(X_{t}^{k,N}-X_{t}^{l,N}) \left(V_{t,q}^{k,N}-V_{t,q}^{l,N} \right)dt\\
=&\dfrac{1}{N}\sum_{k=1}^{N}\left(\dfrac{1}{N}\sum_{l=1}^{N}\nabla^{q}\phi_{N}(X_{t}^{k,N}-X_{t}^{l,N})\right)\ V_{t,q}^{k,N}dt\\
&-\dfrac{1}{N}\sum_{l=1}^{N}\left(\dfrac{1}{N}\sum_{k=1}^{N}\nabla^{q}\phi_{N}(X_{t}^{k,N}-X_{t}^{l,N})\right) V_{t,q}^{l,N}dt \\
=&\dfrac{1}{N}\sum_{k=1}^{N}\left(\dfrac{1}{N}\sum_{l=1}^{N}\nabla^{q}\phi_{N}(X_{t}^{k,N}-X_{t}^{l,K})\right) V_{t,q}^{k,N}dt\\
&-\dfrac{1}{N}\sum_{l=1}^{N}\left(-\dfrac{1}{N}\sum_{k=1}^{N}\nabla^{q}\phi_{N}(X_{t}^{l,N}-X_{t}^{k,N})\right) V_{t,q}^{l,N}dt \\
=&\dfrac{2}{N}\sum_{k=1}^{N}\left(\dfrac{1}{N}\sum_{l=1}^{N}\nabla^{q}\phi_{N}(X_{t}^{k,N}-X_{t}^{l,N})\right) V_{t,q}^{k,N}dt\\
=&\dfrac{2}{N}\sum_{k=1}^{N}\left(\dfrac{1}{N}\sum_{l=1}^{N}\nabla\phi_{N}(X_{t}^{k,N}-X_{t}^{l,N})\right)\cdot V_{t}^{k,N}dt\\
=&\dfrac{2}{N}\sum_{k=1}^{N}V_{N}^{k}\cdot\nabla\left(S_{t}^{N}\ast\phi_{N}\right)(X_{t}^{k,N})dt\\
\end{align*}
By Leibnitz rule  we get  

\begin{align*}
d\left(-\dfrac{2}{N}\sum_{k=1}^{N}\left(\varrho(.,t)\ast\phi_{N}^{r}\right)(X_{t}^{k,N})\right)
=&-\dfrac{2}{N}\sum_{k=1}^{N}d\left(\left(\varrho(.,t)\ast\phi_{N}^{r}\right)(X_{t}^{k,N})\right)\\
=&-\dfrac{2}{N}\sum_{k=1}^{N}d\left(\int\varrho(x,t)\phi_{N}^{r}(x-X_{t}^{k,N})dx\right)\\
=&-\dfrac{2}{N}\sum_{k=1}^{N}V_{t}^{k,N}\cdot\nabla\left(\varrho(.,t)\ast\phi_{N}^{r}\right)(X_{t}^{k,N})dt\\
&-\dfrac{2}{N}\sum_{k=1}^{N}\left( d\varrho(.,t)\ast\phi_{N}^{r}\right)(X_{t}^{k,N})dt\\
=&-\dfrac{2}{N}\sum_{k=1}^{N}V_{t}^{k,N}\cdot\nabla\left(\varrho(.,t)\ast\phi_{N}^{r}\right)(X_{t}^{k,N})dt\\
&+\dfrac{2}{N}\sum_{k=1}^{N}\left(\operatorname{div}_{x}(\varrho(\cdot,t)\upsilon(\cdot,t))\ast\phi_{N}^{r}\right)(X_{t}^{k,N})dt.
\end{align*}

and 
\begin{align*}
d\left(\Vert\varrho(.,t)\Vert_{L^{2}(\R^{d})}^{2}\right)=&-2\langle\varrho(\cdot,t),\operatorname{div}_{x}(\varrho(\cdot,t)\upsilon(\cdot,t))\rangle dt.
\end{align*}

Adding all terms  we have

\begin{align*}
d(Q_{t}^{N})=&-\dfrac{2}{N}\sum_{k=1}^{N}V_{t,q}^{k,N}\nabla^{q}\left(S_{t}^{N}\ast\phi_{N}\right)(X_{t}^{k,N})dt\\
&+\dfrac{2}{N}\sum_{k=1}^{N}V_{t,q}^{k,N}  \sigma_{q}(t,X_{t}^{k,N})  d Y_{t}^{q}\\
&+\dfrac{2}{N}\sum_{k=1}^{N}\upsilon(X_{t}^{k,N},t)\cdot\nabla\left(S_{t}^{N}\ast\phi_{N}\right)(X_{t}^{k,N})dt\\
&+\dfrac{2}{N}\sum_{k=1}^{N}V_{t,q}^{k,N}\left(\upsilon(X_{t}^{k,N},t)\cdot\nabla\upsilon_{q}(X_{t}^{k,N},t)+\nabla^{q}\varrho(X_{t}^{k,N},t)\right)dt\\
&-\dfrac{2}{N}\sum_{k=1}^{N}V_{t,q}^{k,N}\nabla\upsilon_{q}(X_{t}^{k,N},t)\cdot V_{t}^{k,N}+\dfrac{2}{N}\sum_{k=1}^{N}\upsilon_{q}(X_{t}^{k,N},t) \sigma_{q}(t,X_{t}^{k,N})  dY_{t}^{q}\\
&- \dfrac{2}{N}\sum_{k=1}^{N} V_{t,q}^{k,N}  \sigma_{q}(t,X_{t}^{k,N}) dY_{t}^{q}\\
&-\dfrac{2}{N}\sum_{k=1}^{N}\upsilon_{q}(X_{t}^{k,N},t)\left(\upsilon(X_{t}^{k,N},t)\cdot\nabla\upsilon_{q}(X_{t}^{k,N},t)+\nabla^{q}\varrho(X_{t}^{k,N},t)\right)dt\\
&- \dfrac{2}{N}\sum_{k=1}^{N}\upsilon_{q}(X_{t}^{k,N},t) \sigma_{q}(t,X_{t}^{k,N}) dY_{t}^{q}- \dfrac{2}{N}\sum_{k=1}^{N}\upsilon_{q}(X_{t}^{k,N},t) \sigma_{q}(t,X_{t}^{k,N}) dY_{t}^{q}
\end{align*}

\begin{align*}
\quad\qquad\quad& +\dfrac{2}{N}\sum_{k=1}^{N}\upsilon_{q}(X_{t}^{k,N},t)\nabla\upsilon_{q}(X_{t}^{k,N},t)\cdot V_{t}^{k,N}dt\\
&+\dfrac{2}{N}\sum_{k=1}^{N}V_{t,q}^{k,N}\nabla^{q}\left(S_{t}^{N}\ast\phi_{N}\right)(X_{t}^{k,N})dt\\
&-\dfrac{2}{N}\sum_{k=1}^{N}V_{t}^{k,N}\cdot\nabla\left(\varrho(.,t)\ast\phi_{N}^{r}\right)(X_{t}^{k,N})dt\\
&+\dfrac{2}{N}\sum_{k=1}^{N}\left(\operatorname{div}_{x}(\varrho(\cdot,t)\upsilon(\cdot,t))\ast\phi_{N}^{r}\right)(X_{t}^{k,N})dt-2\langle\varrho(\cdot,t),\operatorname{div}_{x}(\varrho(\cdot,t)\upsilon(\cdot,t))\rangle dt
\end{align*}

 Doing obvious cancellations we have 
\begin{align*}
&d(Q_{t}^{N})\\
=&\dfrac{2}{N}\sum_{k=1}^{N}\upsilon(X_{t}^{k,N},t)\cdot\nabla\left(S_{t}^{N}\ast\phi_{N}\right)(X_{t}^{k,N})dt+\dfrac{2}{N}\sum_{k=1}^{N}V_{t,q}^{k,N}\upsilon(X_{t}^{k,N},t)\cdot\nabla\upsilon_{q}(X_{t}^{k,N},t)dt\\
&+\dfrac{2}{N}\sum_{k=1}^{N}V_{t}^{k,N}\cdot\nabla\varrho(X_{t}^{k,N},t)dt-\dfrac{2}{N}\sum_{k=1}^{N}V_{t,q}^{k,N}\nabla\upsilon_{q}(X_{t}^{k,N},t)\cdot V_{t}^{k,N} dt\\
&-\dfrac{2}{N}\sum_{k=1}^{N}\upsilon_{q}(X_{t}^{k,N},t)\upsilon(X_{t}^{k,N},t)\cdot\nabla\upsilon_{q}(X_{t}^{k,N},t)dt-\dfrac{2}{N}\sum_{k=1}^{N}\upsilon(X_{t}^{k,N},t)\cdot\nabla\varrho(X_{t}^{k,N},t)dt\\
&+\dfrac{2}{N}\sum_{k=1}^{N}\upsilon_{q}(X_{t}^{k,N},t)\nabla\upsilon_{q}(X_{t}^{k,N},t)\cdot V_{t}^{k,N}dt-\dfrac{2}{N}\sum_{k=1}^{N}V_{t}^{k,N}\cdot\nabla\left(\varrho(.,t)\ast\phi_{N}^{r}\right)(X_{t}^{k,N})dt\\
&+\dfrac{2}{N}\sum_{k=1}^{N}\left(\operatorname{div}_{x}(\varrho(\cdot,t)\upsilon(\cdot,t))\ast\phi_{N}^{r}\right)(X_{t}^{k,N})dt-2\langle\varrho(\cdot,t),\operatorname{div}_{x}(\varrho(\cdot,t)\upsilon(\cdot,t))\rangle dt\\
=&\sum_{j=1}^{10}   D_{N}(j,t).
\end{align*}

We observe that all term involving Young differential (integral) have been canceled. In the rest of the proof we follow 
line by line  \cite{Correa} and  \cite{Oes}. 

From the definition of the empirical measure and convolution,  and integration by part we obtain 
\mm{
\begin{align*}
&D_{N}(9,t)=2\left\langle S_{t}^{N},\left(\left(\operatorname{div}_{x}\left(\varrho(.,t)\upsilon(.,t)\right)\right)\ast\phi_{N}^{r}\right)(.)\right\rangle\\
=&-2\left\langle\nabla(S_{t}^{N}\ast\phi_{N}^{r})(.),\varrho(.,t)\upsilon(.,t)\right\rangle.
\end{align*}
}
From the definition of the empirical measure and integration by parts we deduce

\begin{align}
A_{N}(1,t)&:=D_{N}(1,t)+D_{N}(6,t)+D_{N}(9,t)+D_{N}(10,t)\nonumber\\
&\,\,=\dfrac{2}{N}\sum_{k=1}^{N} \upsilon(X_{t}^{k,N},t)\cdot\nabla(S_{t}^{N}\ast\phi_{N})(X_{t}^{k,N})-\dfrac{2}{N}\sum_{k=1}^{N}\upsilon(X_{t}^{k,N},t)\cdot\nabla\varrho(,t)\nonumber\\
&\,\,\,\,\,\,\,\,\,-2\left\langle\nabla(S_{t}^{N}\ast\phi_{N}^{r})(.),\varrho(.,t)\upsilon(.,t)\right\rangle-2\left\langle\varrho(.,t),\operatorname{div}_{x}(\varrho(.,t)\upsilon(.,t))\right\rangle\nonumber\\
&\,\,=2\left\langle S_{t}^{N},\nabla(S_{t}^{N}\ast\phi_{N})(.)\cdot\upsilon(,t)\right\rangle-2\left\langle S_{t}^{N},\nabla\varrho(,t)\cdot\upsilon(.,t)\right\rangle\nonumber\\
&\,\,\,\,\,\,\,\,\,-2\left\langle\varrho(.,t),\nabla(S_{t}^{N}\ast\phi_{N}^{r})(.)\cdot\upsilon(.,t)\right\rangle+2\left\langle\varrho(.,t),\nabla\varrho(.,t)\cdot\upsilon(.,t)\right\rangle.\label{AN1}
\end{align}

 We observe  
\begin{align}\label{AN2}
A_{N}(2,t):=&D_{N}(2,t)+D_{N}(4,t)+D_{N}(7,t)+D_{N}(5,t)\nonumber\\
=&\dfrac{2}{N}\sum_{k=1}^{N}V_{t,q}^{k,N}\upsilon(X_{t}^{k,N},t)\cdot\nabla\upsilon_{q}(X_{t}^{k,N},t)-\dfrac{2}{N}
\sum_{k=1}^{N}V_{t,q}^{k,N}\nabla\upsilon_{q}(X_{t}^{k,N},t) \cdot V_{t}^{k,N}\nonumber\\
&+\dfrac{2}{N}\sum_{k=1}^{N}\upsilon_{q}(X_{t}^{k,N},t)\nabla\upsilon_{q}(X_{t}^{k,N},t) \cdot V_{t}^{k,N}\nonumber\\
&-\dfrac{2}{N}
\sum_{k=1}^{N}\upsilon_{q}(X_{t}^{k,N},t)\upsilon(X_{t}^{k,N},t)\cdot\nabla\upsilon_{q}(X_{t}^{k,N},t).
\end{align}
Let $C$ be a constant that changes from line to line. Conveniently grouping and applying Young inequality we can deduce
\begin{align}\label{cotaAN2}
|A_{N}(2,t)|=&\dfrac{2}{N}\bigg|\sum_{k=1}^{N}V_{t,q}^{k,N}\upsilon_{q^{\prime}}(X_{t}^{k,N},t)\nabla^{q^{\prime}}\upsilon_{q}(X_{t}^{k,N},t)-V_{t,q}^{k,N}\nabla^{q^{\prime}}\upsilon_{q}(X_{t}^{k,N},t) 
V_{t,q^{\prime}}^{k,N}\nonumber\\
+&\upsilon_{q}(X_{t}^{k,N},t)\nabla^{q^{\prime}}\upsilon_{q}(X_{t}^{k,N},t) V_{t,q^{\prime}}^{k,N}-\upsilon_{q}(X_{t}^{k,N},t)\upsilon_{q^{\prime}}(X_{t}^{k,N},t)\nabla^{q^{\prime}}\upsilon_{q}
(X_{t}^{k,N},t)\bigg|\nonumber\\
=&\frac{2}{N}\bigg|\sum_{k=1}^{N}-\nabla^{q^{\prime}}\upsilon_{q}(X_{t}^{k,N},t)\left(V_{t,q}^{k,N}-\upsilon_{q}(X_{t}^{k,N},t)\right)\left(V_{t,q^{\prime}}^{k,N}-\upsilon_{q^{\prime}}(X_{t}^{k,N},t)\right)\bigg|\nonumber\\
\leq& C\frac{1}{N}\sum_{q,q^{\prime}=1}^{d}\sum_{k=1}^{N}\Big|V_{t,q}^{k,N}-\upsilon_{q}(X_{t}^{k,N},t)\Big|\thinspace\Big|V_{t,q^{\prime}}^{k,N}-\upsilon_{q^{\prime}}(X_{t}^{k,N},t)\Big|\nonumber\\
\leq& C\frac{1}{N}\sum_{k=1}^{N}\Big|V_{t}^{k,N}-\upsilon(X_{t}^{k,N},t)\Big|^{2}.
\end{align}
Now, we observe 
\begin{align}\label{AN3}
A_{N}(3,t):=& D_{N}(3,t)+D_{N}(8,t)\nonumber\\
=&\dfrac{2}{N}\sum_{k=1}^{N}V_{t}^{k,N}\cdot\nabla\varrho(X_{t}^{k,N},t)-\dfrac{2}{N}\sum_{k=1}^{N}V_{t}^{k,N}\cdot\nabla\left(\varrho(.,t)\ast\phi_{N}^{r}\right)(X_{t}^{k,N})\nonumber\\
=&\dfrac{2}{N}\sum_{k=1}^{N}V_{t}^{k,N}\cdot\left(\nabla\varrho(X_{t}^{k,N},t)-\nabla\left(\varrho(.,t)\ast\phi_{N}^{r}\right)(X_{t}^{k,N})\right).
\end{align}

By H\"older inequality   we obtain
\begin{align}\label{cotaAN3}
|A_{N}(3,t)|=&\frac{2}{N}\left|\sum_{k=1}^{N}V_{t}^{k,N}\cdot\left(\nabla\varrho(X_{t}^{k,N},t)-\nabla\left(\varrho(.,t)\ast\phi_{N}^{r}\right)(X_{t}^{k,N})\right)\right|\nonumber\\
\leq&\frac{2}{N}\sum_{k=1}^{N}\left|V_{t}^{k,N}\cdot\left(\nabla\varrho(X_{t}^{k,N},t)-\nabla\left(\varrho(.,t)\ast\phi_{N}^{r}\right)(X_{t}^{k,N})\right)\right|\nonumber\\
\leq&\frac{2}{N}\sum_{k=1}^{N}\left|V_{t}^{k,N}\right|\left|\left(\nabla\varrho(X_{t}^{k,N},t)-\nabla\left(\varrho(.,t)\ast\phi_{N}^{r}\right)(X_{t}^{k,N})\right)\right|\nonumber\\
\leq&\frac{2}{N}\sum_{k=1}^{N}\left|V_{t}^{k,N}\right|\Vert\nabla\varrho(.,t)-\nabla\varrho(.,t)\ast\phi_{N}^{r}(.)\Vert_{\infty}\nonumber\\
=&2\Vert\nabla\varrho(.,t)-\nabla\varrho(.,t)\ast\phi_{N}^{r}(.)\Vert_{\infty}\sum_{k=1}^{N}\frac{\left|V_{t}^{k,N}\right|}{N}\nonumber\\
\leq&2\Vert\nabla\varrho(.,t)-\nabla\varrho(.,t)\ast\phi_{N}^{r}(.)\Vert_{\infty}\left(\sum_{k=1}^{N}\frac{\left|V_{t}^{k,N}\right|^{2}}{N}\right)^{1/2}\left(\sum_{k=1}^{N}\frac{1}{N}\right)^{1/2}\nonumber\\
=&2\Vert\nabla\varrho(.,t)-\nabla\varrho(.,t)\ast\phi_{N}^{r}(.)\Vert_{\infty}\left(\frac{1}{N}\sum_{k=1}^{N}\left|V_{t}^{k,N}\right|^{2}\right)^{1/2}\nonumber\\
\leq&2\left(\frac{1}{N}\sum_{k=1}^{N}\left|V_{t}^{k,N}\right|^{2}\right)^{1/2}\sum_{i=1}^{d}\Vert\nabla^{i}\varrho(.,t)-\nabla^{i}\varrho(.,t)\ast\phi_{N}^{r}(.)\Vert_{\infty}\nonumber\\
\leq&2\left(\frac{1}{N}\sum_{k=1}^{N}\left|V_{t}^{k,N}\right|^{2}\right)^{1/2}N^{-\beta/d}\sum_{i=1}^{d}C_{0}\Vert\nabla(\nabla^{i}\varrho(.,t))\Vert_{\infty}\nonumber\\
\leq&N^{-\beta/d}C\left(\frac{1}{N}\sum_{k=1}^{N}\left|\upsilon(X_{t}^{k,N},t)\right|^{2}+\frac{1}{N}\sum_{k=1}^{N}\left|V_{t}^{k,N}-\upsilon(X_{t}^{k,N},t)\right|^{2}\right)^{1/2}\nonumber\\
\leq&N^{-\beta/d}C\left(\Vert\upsilon(.,t)\Vert_{\infty}^{2}+\frac{1}{N}\sum_{k=1}^{N}\left|V_{t}^{k,N}-\upsilon(X_{t}^{k,N},t)\right|^{2}\right)^{1/2}\nonumber\\
\leq&N^{-\beta/d}C\max\{\Vert\upsilon(.,t)\Vert_{\infty},1\}\left(1+\frac{1}{N}\sum_{k=1}^{N}\left|V_{t}^{k,N}-\upsilon(X_{t}^{k,N},t)\right|^{2}\right)^{1/2}\nonumber\\
\leq&N^{-\beta/d}C\max\{\Vert\upsilon(.,t)\Vert_{\infty},1\}\left(1+\frac{1}{N}\sum_{k=1}^{N}\left|V_{t}^{k,N}-\upsilon(X_{t}^{k,N},t)\right|^{2}\right)\nonumber\\
\leq&C\left(N^{-\beta/d}+\frac{1}{N}\sum_{k=1}^{N}\left|V_{t}^{k,N}-\upsilon(X_{t}^{k,N},t)\right|^{2}\right)
\end{align}

We have that 
\begin{equation}\label{ANRN}
A_{N}(1,t)=2\langle S_{t}^{N}-\varrho(.,t),\nabla\left(\left(S_{t}^{N}-\varrho(.,t)\right)\ast\phi_{N}\right)(.)\cdot\upsilon(.,t)\rangle+R_{N}(t)
\end{equation}

where 
\begin{align*}
R_{N}(t)=&2\langle S_{t}^{N}-\varrho(.,t),\nabla\left(\varrho(.,t)\ast\phi_{N}^{r}-\varrho(.,t)\right)(.)\cdot\upsilon(.,t) \rangle\\
&+2\langle \varrho(.,t),\nabla\left(S_{t}^{N}\ast\phi_{N}-S_{t}^{N}\ast\phi_{N}^{r}\right)(.)\cdot\upsilon(.,t)\rangle.
\end{align*}
By integration by parts and simple calculations we obtain
\begin{align}\label{RN}
R_{N}(t)=&  2\langle S_{t}^{N}-\varrho(.,t),\nabla\left(\varrho(.,t)\ast\phi_{N}^{r}-\varrho(.,t)\right)(.)\cdot\upsilon(.,t)\rangle\nonumber\\
 &+2\langle \varrho(.,t),\nabla\left(S_{t}^{N}\ast\left(\phi_{N}-\phi_{N}^{r}\right)\right)(.)\cdot\upsilon(.,t)\rangle\nonumber\\
=&2\langle S_{t}^{N}-\varrho(.,t),\nabla\left(\varrho(.,t)\ast\phi_{N}^{r}-\varrho(.,t)\right)(.)\cdot\upsilon(.,t)\rangle\nonumber\\
 &-2\langle \operatorname{div}_{x}(\varrho(\cdot,t)\upsilon(.,t)),\left(S_{t}^{N}\ast\left(\phi_{N}-\phi_{N}^{r}\right)\right)(\cdot)\rangle\nonumber\\
=&2\langle S_{t}^{N}-\varrho(.,t),\nabla\left(\varrho(.,t)\ast\phi_{N}^{r}-\varrho(.,t)\right)(.)\cdot\upsilon(.,t)\rangle\nonumber\\
 &-2\langle S_{t}^{N},\left(\operatorname{div}_{x}(\varrho(\cdot,t)\upsilon(.,t))\ast\left(\phi_{N}-\phi_{N}^{r}\right)\right)(\cdot)\rangle\nonumber\\
=&2\langle S_{t}^{N}-\varrho(.,t),\nabla\left(\varrho(.,t)\ast\phi_{N}^{r}-\varrho(.,t)\right)(.)\cdot\upsilon(.,t)\rangle\nonumber\\
&+2\langle S_{t}^{N},\left(\operatorname{div}_{x}(\varrho(\cdot,t)\upsilon(.,t))\ast\left(\phi_{N}^{r}-\phi_{N}\right)\right)(\cdot)\rangle.
\end{align}

Since   $S_{t}^{N},\varrho$  are probability densities  and \eqref{cotafNbeta} we obtain
\begin{align}\label{cotaRN}
|R_{N}(t)|\leq&2|\langle S_{t}^{N}-\varrho(.,t),\nabla\left(\varrho(.,t)\ast\phi_{N}^{r}-\varrho(.,t)\right)(.)\cdot\upsilon(.,t)\rangle|\nonumber\\
&+2\langle S_{t}^{N},|\left(\operatorname{div}_{x}(\varrho(\cdot,t)\upsilon(.,t))\ast\left(\phi_{N}^{r}-\phi_{N}\right)\right)(\cdot)|\rangle\nonumber\\
\leq&2\langle S_{t}^{N}+\varrho(.,t),1\rangle N^{-\beta/d}\sum_{i=1}^{d}C_{0}\Vert\nabla(\nabla^{i}\varrho(.,t))\Vert_{\infty}\Vert\upsilon(.,t)\Vert_{\infty}\nonumber\\
&+2N^{-\beta/d}C_{0}\Vert\nabla\left(\operatorname{div}_{x}(\varrho(\cdot,t)\upsilon(.,t))\ast\phi_{N}^{r}\right)\Vert_{\infty}\nonumber\\
=&2C N^{-\beta/d}\Vert\upsilon(.,t)\Vert_{\infty}+2N^{-\beta/d}C_{0}\Vert\left[\nabla\left(\operatorname{div}_{x}(\varrho(\cdot,t)\upsilon(.,t))\right)\right]\ast\phi_{N}^{r}\Vert_{\infty}\nonumber\\
\leq&2C N^{-\beta/d}\Vert\upsilon(.,t)\Vert_{\infty}+2N^{-\beta/d}C_{0}d\Vert\nabla\left(\operatorname{div}_{x}(\varrho(\cdot,t)\upsilon(.,t))\right)\Vert_{\infty}\Vert\phi_{N}^{r}\Vert_{1}\nonumber\\
=&2C N^{-\beta/d}\Vert\upsilon(.,t)\Vert_{\infty}+2N^{-\beta/d}C_{0}d\Vert\nabla\left(\operatorname{div}_{x}(\varrho(\cdot,t)\upsilon(.,t))\right)\Vert_{\infty}\Vert\phi_{1}^{r}\Vert_{1}\nonumber\\
\leq&CN^{-\beta/d}.
\end{align}
We follow  \cite{Oes} and \cite{Correa} the 
estimation  
\begin{align}
|A_{N}(1,t)-R_{N}(t)|&=2|\langle S_{t}^{N}-\varrho(.,t),\nabla\left(\left(S_{t}^{N}-\varrho(.,t)\right)\ast\phi_{N}\right)(.)\cdot\upsilon(.,t)\rangle|\nonumber\\
&\leq C\left(\Vert S_{t}^{N}\ast\phi_{N}^{r}-\varrho(.,t)\Vert_{L^{2}(\R^{d})}^{2}+N^{-\beta/d}\right).\label{restoA1}
\end{align}

From  \eqref{cotaAN2}, \eqref{cotaAN3}, \eqref{cotaRN}, \eqref{restoA1}  we conclude that there exist   $C>0$ such that 
\begin{align*}
 Q_{t}^{N}  \leq&   Q_{0}^{N} + C\int_{0}^{t}  \left(  Q_{s}^{N}+N^{-\beta/d}\right)ds\qquad0\leq t\leq T
\end{align*}

Then by Gronwall lemma we conclude 
\begin{eqnarray}
  Q_{t}^{N}\leq C( Q_{0}^{N} +N^{-\beta/d})\qquad\text{for all}\,\,\,\,\, N\in\N. 
\end{eqnarray}

Finally we will show the convergence  of  $S_{t}^{N}$ e $V_{t}^{N}$ to  $\upsilon$ e $\varrho \upsilon$  respectively.
 
\noindent  Let  $f\in E_{2,q}^{s}$,  we have 
\begin{align}
|\langle S_{t}^{N},f\rangle &-\langle\varrho(.,t),f \rangle|\nonumber\\
&=|\langle S_{t}^{N},f-f\ast\phi_{N}^{r}+f\ast\phi_{N}^{r}\rangle-\langle\varrho(.,t),f \rangle|\nonumber\\
&=|\langle S_{t}^{N},f-f\ast\phi_{N}^{r}\rangle+\langle S_{t}^{N}\ast\phi_{N}^{r},f\rangle-\langle\varrho(.,t),f \rangle|\nonumber\\
&=|\langle S_{t}^{N},f-f\ast\phi_{N}^{r}\rangle+\langle S_{t}^{N}\ast\phi_{N}^{r}-\varrho(.,t),f \rangle|\nonumber\\
&\leq|\langle S_{t}^{N},f-f\ast\phi_{N}^{r}\rangle|+|\langle S_{t}^{N}\ast\phi_{N}^{r}-\varrho(.,t),f \rangle|\nonumber\\
&\leq|\langle S_{t}^{N},1\rangle|\left\Vert f-f\ast\phi_{N}^{r}\right\Vert_{\infty}+|\langle S_{t}^{N}\ast\phi_{N}^{r}-\varrho(,t),f \rangle|\nonumber\\
&\leq\left\Vert f-f\ast\phi_{N}^{r}\right\Vert_{\infty}+\left\Vert f\right\Vert_{L^{2}(\R^{d})}\left\Vert S_{t}^{N}\ast\phi_{N}^{r}-\varrho(.,t)\right\Vert_{L^{2}(\R^{d})}\nonumber\\
&\leq CN^{-\beta/d}\left\Vert\nabla f\right\Vert_{\infty}+\left\Vert f\right\Vert_{L^{2}(\R^{d})}\left\Vert S_{t}^{N}\ast\phi_{N}^{r}-\varrho(.,t)\right\Vert_{L^{2}(\R^{d})}.\label{limXNvarr}
\end{align}

\mm{
We observe that

\begin{equation}\label{incl}
\left\Vert    f\right\Vert_{L^{2}(\R^{d})}  \leq  \left\Vert f\right\Vert_{ E_{2,q}^{s}}.
\end{equation}

By (\ref{Sobolev}) we have 
\begin{equation}\label{sob}
\left\Vert\nabla f\right\Vert_{\infty}\leq C  \left\Vert\nabla f\right\Vert_{ E_{2,q}^{s}}.
\end{equation}

From (\ref{limXNvarr}),   (\ref{incl})  and  (\ref{sob}) we deduce 

}

\[
  \|S_{t}^{N}- \varrho_{t} \|_{E_{2,\hat{q}}^{-s}}^{2} \leq  C_{t} (N^{-\beta/ d} +  Q_{0}^{N}). 
\]

\noindent Let $g\in E_{2,q}^{s}$, we obtain 
\begin{align}
&\left|\left\langle V_{t}^{N}-\varrho(.,t)\upsilon(.,t),g\right\rangle\right|\nonumber\\
&\quad=\left|\frac{1}{N}\sum_{k=1}^{N}V_{t}^{k,N}\cdot g(X_{t}^{k,N})-\left\langle\varrho(.,t),\upsilon(.,t)\cdot g\right\rangle\right|\nonumber\\
&\quad=\left|\frac{1}{N}\sum_{k=1}^{N}(V_{t}^{k,N}-\upsilon(X_{t}^{k,N},t)\cdot g(X_{t}^{k,N})+\left\langle S_{t}^{N}-\varrho(.,t),\upsilon(.,t)\cdot g\right\rangle\right|\nonumber\\
&\quad\leq\frac{1}{N}\sum_{k=1}^{N}\left|V_{t}^{k,N}-\upsilon(X_{t}^{k,N},t)\right|\left| g(X_{t}^{k,N})\right|\nonumber+\left|\left\langle S_{t}^{N}-\varrho(.,t),\upsilon(.,t)\cdot g\right\rangle\right|\nonumber\\
&\quad\leq\Vert g\Vert_{\infty}\frac{1}{N}\sum_{k=1}^{N}\left|V_{t}^{k,N}-\upsilon(X_{t}^{k,N},t)\right|+|\langle S_{t}^{N}-\varrho(.,t),\upsilon(.,t)\cdot g\rangle|\nonumber\\
&\quad\leq C\Vert g\Vert_{\infty}\left(\frac{1}{N}\sum_{k=1}^{N}|V_{t}^{k,N}-\upsilon(X_{t}^{k,N},t)|^{2} \right)^{1/2}+|\langle S_{t}^{N}-\varrho(.,t),\upsilon(.,t)\cdot g\rangle |.\label{convV}
\end{align}

From  (\ref{convV})  and  (\ref{Sobolev}) we obtain

\[
\|V_{t}^{N}- (\varrho \upsilon)_{t}\|_{E_{2, \hat{q}}^{-s}}^{2} \leq  C_{T} (N^{-\beta/d} + Q_{0}^{N}). 
\]

\end{proof}

\section*{Acknowledgements}
Author Jesus Correa  has received research grants from CNPq
through the grant  141464/2020-8. Author Juan D. Londoño has received grants from the Coordenação de Aperfeiçoamento de Pessoal de Nível Superior-Brasil (CAPES)-Finance Code 001 Author Christian Olivera is partially supported by FAPESP by the grant  2020/04426-6,  by  CNPq by the grant $422145/2023-8$
 and by FAPESP-ANR by the grant Stochastic and Deterministic Analysis for Irregular Models-2022/03379-0.

\end{document}